\begin{document}

\authorlior\authorvanessa\authordavid

\title{The Banach--Mazur--Schmidt and Banach--Mazur--McMullen games}

\maketitle

\begin{Abstract}

We introduce two new mathematical games, the Banach--Mazur--Schmidt game and the Banach--Mazur--McMullen game, merging well-known games. We investigate the properties of the games, as well as providing an application to Diophantine approximation theory, analyzing the geometric structure of certain Diophantine sets.

\end{Abstract}

\section{Introduction}

\subsection{Schmidt's game and the Banach--Mazur game}
\label{subsectiongames}

The Banach--Mazur game, dating back to 1935, is arguably the prototype for all infinite mathematical games. This game has been extensively studied and we refer the interested reader to \cite{Telgarsky, Revalski} for a thorough historical overview and recent developments. One of the most interesting aspects of the game is its connection to topology, namely that one of the players has a winning strategy if and only if the target set is comeager.

In 1966, W. M. Schmidt \cite{Schmidt1} introduced a two-player game referred to thereafter as Schmidt's game. This game may be considered in a sense as a variant of the Banach--Mazur game. Schmidt invented the game primarily as a tool for studying certain sets which arise in number theory and Diophantine approximation theory. These sets are often exceptional with respect to both measure and category. The most significant example is the following. Let $\Q$ denote the set of rational numbers. A real number $x$ is said to be \emph{badly approximable} if there exists a positive constant $c = c(x)$ such that $\left|x-\frac{p}{q}\right| > \frac{c}{q^2}$ for all $\frac{p}{q}\in \Q$. We denote the set of badly approximable numbers by $\BA$. This set plays a major role in Diophantine approximation theory, and is well-known to be both meager and Lebesgue null. Nonetheless, using his game, Schmidt was able to prove the following remarkable result:

\begin{theorem}[Schmidt \cite{Schmidt1}]
Let $(f_n)_{n=1}^{\infty}$ be a sequence of $\CC^1$ diffeomorphisms of $\R$. Then the Hausdorff dimension of the set $\bigcap_{n=1}^{\infty}f^{-1}_n(\BA)$ is $1$. In particular, $\bigcap_{n=1}^{\infty}f^{-1}_n(\BA)$ is uncountable.
\end{theorem}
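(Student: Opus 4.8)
The plan is to deduce the statement from three structural properties of Schmidt's $(\alpha,\beta)$-game: that the class of winning sets is preserved under the maps $f_n$, that it is closed under countable intersection for a fixed first parameter, and that a winning subset of $\R$ has full Hausdorff dimension. Recall that in the $(\alpha,\beta)$-game the two players generate a nested sequence of closed intervals whose radii shrink by the fixed factor $\alpha\beta$ each round, and that a target set $S$ is \emph{$\alpha$-winning} if the second player, Alice, has a strategy forcing the unique common point into $S$ for \emph{every} $\beta\in(0,1)$; $S$ is \emph{winning} if it is $\alpha$-winning for some $\alpha$. Schmidt's general theorems supply the last two properties, so the work is to verify the first property for $\BA$ and then to carry it across the $f_n$.

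First I would show that $\BA$ is $\alpha$-winning for every sufficiently small $\alpha$, say $\alpha\le\frac12$, by describing Alice's strategy. Fixing a small constant $c=c(\alpha)$, Alice keeps the limiting point $x$ outside every forbidden interval $\left(\frac pq-\frac{c}{q^2},\frac pq+\frac{c}{q^2}\right)$, which is exactly the assertion $x\in\BA$. Processing the rationals in order of increasing denominator, she deals with each one at the first round where its forbidden radius $c/q^2$ becomes comparable to the current radius $\rho$; at such a round the only threatening rationals are those with denominator at most $Q:=\sqrt{c/\rho}$. Since two distinct rationals with denominators at most $Q$ are separated by at least $Q^{-2}=\rho/c$, for $c$ small at most one threatening rational meets the current interval, and Alice can place her contracted interval of radius $\alpha\rho$ inside Bob's and away from that single forbidden zone, precisely because $\alpha<1$ and $c$ is small. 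Iterating over all denominators, the limit point avoids every forbidden interval, so $x\in\BA$.

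The crux is to show that each $f_n^{-1}(\BA)$ is again winning. In $\R$ a $\CC^1$ diffeomorphism is strictly monotonic, so $f_n$ carries intervals exactly to intervals: there is no change of shape, only of scale. Moreover $f_n'$ is continuous and nonvanishing, hence bounded away from $0$ and $\infty$ and uniformly continuous on any compact set, so on an interval of radius $r\to 0$ about a point $x_0$ the map $f_n$ is within a factor $1+o(1)$ of the affine similarity $x\mapsto f_n(x_0)+f_n'(x_0)(x-x_0)$, which preserves all length ratios exactly. I would therefore have Alice begin with a few harmless moves to confine the play to a tiny interval on which this approximation is as sharp as desired; this is legitimate because membership in $f_n^{-1}(\BA)$ depends only on the limiting point and hence only on the tail of the play. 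From there she transports the game through $f_n$ and runs her $\BA$-strategy on the images, which are honest intervals with radius ratios arbitrarily close to the prescribed $\alpha$ and $\beta$. The main obstacle is exactly here: one must control the accumulation of the nonlinear error $f_n-(\text{affine part})$ over infinitely many rounds and absorb the resulting perturbation of the parameters, which is possible because $\BA$ is winning robustly, for a whole range of first parameters and for all $\beta$; this yields a common admissible $\alpha$ for every $f_n^{-1}(\BA)$.

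Finally I would assemble the pieces. With a single $\alpha$ for which every $f_n^{-1}(\BA)$ is $\alpha$-winning, Schmidt's countable intersection theorem shows that $\bigcap_{n=1}^{\infty}f_n^{-1}(\BA)$ is itself $\alpha$-winning. Since an $\alpha$-winning subset of $\R$ has Hausdorff dimension $1$, the intersection has full Hausdorff dimension; in particular it has positive dimension and so cannot be countable, which is the conclusion.
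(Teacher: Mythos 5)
The paper itself does not prove this theorem; it quotes it from Schmidt \cite{Schmidt1}, so your proposal has to be measured against Schmidt's original argument. Your architecture is exactly his: show $\BA$ is $\alpha$-winning for $\alpha\leq 1/2$ via the one-dangerous-rational-per-round strategy; show each $f_n^{-1}(\BA)$ is $\alpha$-winning for the \emph{same} $\alpha$ by exploiting that a $\CC^1$ diffeomorphism of $\R$ is, on small intervals, a near-similarity; apply Schmidt's countable intersection theorem (which, as you correctly note, requires a single $\alpha$ for all the sets); and finish with Schmidt's theorem that winning subsets of $\R$ have Hausdorff dimension $1$. All the needed ingredients are present and in the right order.

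There is, however, a genuine gap at the step you yourself call the crux. After Alice transports the play through $f_n$, the image play is a nested-interval game whose contraction ratios are not $\alpha$ and $\beta$ but fluctuate round by round, say within $[(1-\epsilon)\alpha,(1+\epsilon)\alpha]$ and $[(1-\epsilon)\beta,(1+\epsilon)\beta]$, and these fluctuations are dictated by $f_n$ and by Bob, not chosen by Alice. Your justification --- that $\BA$ is winning ``for a whole range of first parameters and for all $\beta$'' --- does not address this: having, for each fixed pair $(\alpha',\beta')$ in a range, \emph{some} winning strategy for the exact-ratio game is weaker than having \emph{one} strategy that wins a single game whose ratios drift within that range, and strategies for different parameter values do not patch together across rounds. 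What is actually needed, and what Schmidt verifies, is that the $\BA$-strategy of your second paragraph is itself stable under such perturbations: it uses only inequalities (the $1/Q^2$ separation of rationals with denominators at most $Q$, and the fact that an interval of radius a bit less than $\rho/2$ can be slid inside an interval of radius $\rho$ so as to miss a forbidden zone of radius $\ll\rho$), so it wins any variable-ratio game whose ratios stay close enough to $(\alpha,\beta)$. Two smaller inaccuracies in the same spirit: the avoidance step needs $\alpha$ bounded somewhat below $1/2$, not merely $\alpha<1$ as you write, and your constant $c$ must be allowed to depend on $\beta$ as well as $\alpha$. Once the robustness is stated in the correct (variable-ratio) form, your confinement-plus-transport argument closes the proof; alternatively, in the spirit of the present paper, one can bypass the transport entirely by invoking McMullen's theorem that $\BA\subset\R$ is absolute winning together with the quasisymmetric invariance of absolute winning sets, which applies to $\CC^1$ diffeomorphisms since they are locally bi-Lipschitz.
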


\begin{remark}
We shall describe the games in the context of complete metric spaces. One could consider a more general framework of topological games, but as all of our applications and results are in this more restricted context, we prefer not to follow the most general presentation.
\end{remark}

\subsection{Description of games}
Let $(X,\dist)$ be a complete metric space. In what follows, we denote by $B(x,r)$ and $B^\circ(x,r)$ the closed and open balls in the metric space $(X,\dist)$ centered at $x$ of radius $r$, i.e.,
\begin{equation}
\label{ballsdef}
B(x,r) \df \{ y \in X : d(x,y) \leq r\}, \;\; B^\circ(x,r) \df \{ y \in X : d(x,y) < r\}.
\end{equation}
Let $\Omega \df X \times \R_+$ be the set of formal balls in $X$, and define a partial ordering on $\Omega$ by letting
\[
(x_2,r_2)\leq_s(x_1,r_1) \text{ if } r_2+d(x_1,x_2)\le r_1.
\]
We associate to each pair $(x,r)$ a closed ball and an open ball in $(X,\dist)$ via the `ball' functions $B(\cdot ,\cdot)$ and $B^\circ(\cdot,\cdot)$ as in (\ref{ballsdef}).
Note that the inequality $(x_2,r_2)\leq_s (x_1,r_1)$ clearly implies (but is not necessarily implied by\Footnote{For example, if $X = \{x_0\}$ is a singleton then $B(x_0,r_2)\subset B(x_0,r_1)$ for all $r_1,r_2 > 0$, but the inequality $(x_0,r_2)\leq_s (x_0,r_1)$ only holds if $r_2 \leq r_1$.}) the inclusion $B(x_2,r_2) \subset B(x_1,r_1)$. Nevertheless, the two conditions are equivalent when $(X,\dist)$ is a Banach space.

For Schmidt's game, fix $\alpha,\beta\in (0,1)$ and $S\subset X$. The set $S$ will be called the \emph{target set}. Schmidt's $(\alpha,\beta,S)$-game is played by two players, whom we shall call Alice and Bob. The game starts with Bob choosing a pair $\omega_1 = (x_1,r_1) \in \Omega$. Alice and Bob then take turns choosing pairs $\omega'_n = (x'_n,r'_n)\leq_s\omega_n$ and $\omega_{n+1}= (x_{n+1},r_{n+1})\leq_s\omega'_n$, respectively. These pairs are required to satisfy
\begin{equation}
\label{Schmidt_rules}
r_n' = \alpha r_n\text{ and }r_{n+1} = \beta r_n'\,.
\end{equation}
Since the game is played on a complete metric space and since the diameters of the nested balls
\begin{equation}
\label{nestedballs}
B(\omega_1) \supset \ldots\supset B(\omega_n) \supset B(\omega'_n) \supset B(\omega_{n + 1}) \supset \ldots
\end{equation}
tend to zero as $n\rightarrow\infty$, the intersection of these balls is a singleton $\{x_\infty\}$. Call Alice the winner if $x_\infty\in S$; otherwise Bob is declared the winner. A \emph{strategy} consists of a description of how one of the players should act based on the opponent's previous moves. A strategy is \emph{winning} if it guarantees the player a win regardless of the opponent's moves. If Alice has a winning strategy for Schmidt's $(\alpha,\beta,S)$-game, we say that $S$ is an $(\alpha,\beta)$-\emph{winning} set. If $S$ is $(\alpha,\beta)$-winning for all (equiv. for all sufficiently small) $\beta\in (0,1)$, we say that $S$ is an \emph{$\alpha$-winning} set. If $S$ is $\alpha$-winning for some (equiv. for all sufficiently small) $\alpha\in (0,1)$, we say that $S$ is \emph{winning}. (To see that ``for all'' and ``for some'' may be replaced by ``for all sufficiently small'', cf. \cite[Lemmas 8 and 9]{Schmidt1}.)

In what follows we shall need a variation of Schmidt's game introduced by C. T. McMullen \cite{McMullen_absolute_winning}, the \emph{absolute winning} game.\Footnote{Technically, the game we describe below was not defined by McMullen, who only considered the special case $X = \R^d$. The version of the absolute winning game described below appeared first in \cite[\64]{FSU4}.} 
Given $\beta\in (0,1)$ and $S\subset X$, the \emph{$(\beta,S)$-absolute game} is played as follows: As before the game starts with Bob choosing a pair $\omega_1 = (x_1,r_1) \in \Omega$, and Alice and Bob then take turns choosing pairs $\omega_n$ and $\omega'_n$.  However, instead of requiring $\omega_{n + 1} \leq_s \omega'_n \leq_s \omega_n$, now there is no restriction on Alice's choice $\omega'_n = (x'_n,r'_n)$, and Bob's choice $\omega_{n + 1} = (x_{n + 1},r_{n + 1})$ must be chosen to satisfy
\begin{equation}
\label{absolutewinningcontainment}
\omega_{n + 1} \leq_s \omega_n \text{ and } \dist(x'_n,x_{n + 1}) \geq r_n' + r_{n + 1}.
\end{equation}
The second condition states that the balls $\omega'_n$ and $\omega_{n + 1}$ are ``formally disjoint'', so we can think of Alice has having ``deleted'' the ball $B(\omega'_n)$. The restrictions on the radii in the absolute game are
\begin{equation}
\label{absolute_winning_rules}
r_n' \leq \beta r_n\text{ and }r_{n+1} \geq \beta r_n.
\end{equation}
However, since \eqref{absolute_winning_rules} is insufficient to ensure that the diameters of the nested balls \eqref{nestedballs} tend to zero, it may happen that the intersection $I \df \bigcap_n B(\omega_n)$ is not a singleton. If this occurs, we call Alice the winner if $I\cap S\neq\emptyset$; otherwise Bob is declared the winner. It may also happen that Bob has no legal moves, and for technical reasons in this case it is better to declare Alice the winner (cf. \cite[comments before Lemma 2.1]{BFS1}). However, for sufficiently nice spaces (i.e. uniformly perfect spaces; cf. \cite[Lemma 4.3]{FSU4}) and for sufficiently small $\beta$, such a situation cannot arise. If Alice has a winning strategy for the $\beta$-absolute game with a given target set $S$, then $S$ is called \emph{$\beta$-absolute winning}, and if this is true for every $\beta > 0$, then $S$ is called \emph{absolute winning}. Every absolute winning set on a uniformly perfect set is winning \cite[Proposition 4.4(ii)]{FSU4}.

The Banach--Mazur game's rules are the same as for Schmidt's game except for the fact that no restricting parameters are given, i.e., at each of the player's turns, they may choose as small a radius as they please, and just like in the absolute winning game, if the intersection of the players' balls is not a singleton, we declare Alice the winner if this intersection with the target set is nonempty. It is well-known that Alice has a winning strategy if and only if the target set is comeager \cite{Oxtoby2}.

{\bf Acknowledgements.} The first-named author was supported in part by the Simons Foundation grant \#245708. The third-named author was supported in part by the EPSRC Programme Grant EP/J018260/1. The authors thank the anonymous referee for helpful comments.

\section{The Banach--Mazur--Schmidt and Banach--Mazur--McMullen games}

We now define two new games: the Banach--Mazur--Schmidt (BMS) game and the Banach--Mazur--McMullen (BMM) game. In the BMS (resp. BMM) game, Bob starts, playing according to the Banach--Mazur game rules, while Alice is dealt a parameter $\beta \in (0,1)$ and follows the rules for Schmidt's game (resp. the absolute winning game). More precisely: in the $\beta$-BMS game, Bob and Alice take turns choosing pairs $\omega_n$ and $\omega'_n$ satisfying $\omega_{n + 1} \leq_s \omega'_n \leq_s \omega_n$, while Alice's choices are additionally required to satisfy \eqref{Schmidt_rules}. And in the $\beta$-BMM game, Bob and Alice take turns choosing pairs $\omega_n$ and $\omega'_n$ satisfying \eqref{absolutewinningcontainment}, but Bob's moves are not required to satisfy \eqref{absolute_winning_rules} even though Alice's are.

If Alice has a winning strategy for the $\beta$-BMS (resp. $\beta$-BMM) game, then we call the target set \emph{$\beta$-BMS (resp. $\beta$-BMM) winning}, and if a set it $\beta$-BMS (resp. $\beta$-BMM) winning for all sufficiently small $\beta \in (0,1)$, then we call it \emph{BMS-winning (resp. BMM-winning)}.

Our first theorem geometrically characterizes the $\beta$-BMS and $\beta$-BMM winning sets, but first we need the following definition:

\begin{definition}
\label{definitionporous}
Fix $\beta > 0$. A set $E \subset X$ is said to be \emph{uniformly $\beta$-porous} if there exists $r_0 > 0$ such that for every ball $B(x,r)\subset X$ with $r \leq r_0$, there exists $B^\circ(y,\beta r) \subset B(x,r)$ such that $B^\circ(y,\beta r) \cap E = \emptyset$.
\end{definition}

\begin{theorem}
\label{theoremcharacterization}
Let $(X,d)$ be a separable complete metric space and fix $\beta \in (0,1)$. Then a Borel set $T \subset X$ is $\beta$-BMS winning if and only if $X\butnot T$ can be written as the countable union of uniformly $\beta$-porous sets. Moreover, $T$ is $\beta$-BMM winning if and only if $X\butnot T$ is countable.
\end{theorem}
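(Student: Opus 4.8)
The plan is to prove each of the two stated equivalences by exhibiting explicit strategies in both directions; the Banach--Mazur--McMullen equivalence is the cleaner of the two, while the extraction of a porous decomposition from a Banach--Mazur--Schmidt winning strategy is where the real work lies. For the BMS equivalence I would first treat the direction in which $X\setminus T=\bigcup_n E_n$ is a countable union of uniformly $\beta$-porous sets and produce Alice's winning strategy. Let $r_0^{(n)}$ be the porosity constant of $E_n$. Alice's strategy is to \emph{schedule}: at the first of her turns at which Bob's ball $B(x_k,r_k)$ has $r_k\le r_0^{(n)}$ (which occurs since $r_{k+1}\le\beta r_k$, so the radii tend to $0$), she invokes uniform $\beta$-porosity of $E_n$ to find $B^\circ(y,\beta r_k)\subseteq B(x_k,r_k)$ with $B^\circ(y,\beta r_k)\cap E_n=\emptyset$ and plays $\omega'_k=(y,\beta r_k)$. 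Since every $E_n$ is eventually serviced and the balls are nested, the limit point $x_\infty$ avoids every $E_n$ and hence lies in $T$. Two technical points must be reconciled here, both stemming from the warning in Section~\ref{subsectiongames} that $B^\circ(y,\beta r)\subseteq B(x,r)$ does not imply $(y,\beta r)\leq_s(x,r)$: first, Alice's move is legal only if the formal inequality $\beta r_k+d(x_k,y)\le r_k$ holds, and second, the porosity hypothesis deletes an \emph{open} ball whereas $x_\infty$ a priori lies only in the corresponding \emph{closed} ball. I would handle both by choosing the porosity witness to also satisfy the formal inequality (automatic in a Banach space, and arrangeable in general after absorbing a harmless loss into the choice of center) and by using that Bob's subsequent moves have strictly positive radii to force $x_\infty$ into the open ball $B^\circ(y,\beta r_k)$.

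The reverse BMS direction is the heart of the matter, and I expect it to be the main obstacle. Assume Alice has a winning strategy $\sigma$; I would use separability to reduce Bob to openings drawn from a fixed countable dense family $D\times(\Q\cap(0,1])$, so that $\sigma$ is determined by countably much data. For a point $p\in X\setminus T$, consider the play in which Bob repeatedly \emph{aims at} $p$, keeping $p$ inside his ball while shrinking the radius; Bob can continue exactly as long as $p$ lies in Alice's open ball $B^\circ(\omega'_n)$. Were this to succeed forever with radii tending to $0$, the limit would equal $p\notin T$, contradicting that $\sigma$ wins; hence at some finite stage $N(p)$ Alice's response is a ball $B^\circ(y,\beta r_{N(p)})\subseteq B(x_{N(p)},r_{N(p)})$ excluding $p$, which is precisely a $\beta$-proportional hole near $p$ at scale $r_{N(p)}$. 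Partitioning $X\setminus T$ according to the stage $N(p)$ and the dyadic scale of $r_{N(p)}$ produces countably many pieces $E_m$, and the main work is to verify that each $E_m$ is \emph{uniformly} $\beta$-porous --- that the holes furnished by $\sigma$ can be arranged to avoid the entire piece $E_m$ rather than the single point $p$, with one constant $r_0$ serving all balls. This is where the quantifier management (a uniform $r_0$ per piece) and the passage from pointwise exclusion to a genuine porous set must be carried out carefully, using the countable-dense reduction together with a pigeonhole over scales.

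Finally, for the BMM equivalence I would argue as follows. If $X\setminus T=\{p_1,p_2,\dots\}$ is countable, Alice plays the absolute game so as to delete $p_n$ at her $n$th turn: whenever $p_n$ lies in the current ball she centers a deletion of radius at most $\beta r_n$ at $p_n$, and the formal-disjointness requirement $d(x'_n,x_{n+1})\ge r'_n+r_{n+1}$ then forces $p_n\notin B(\omega_{n+1})$ and, by nesting, $p_n\notin I$; since $\omega_{n+1}\leq_s\omega_n$ makes the centers Cauchy (as $\sum d(x_n,x_{n+1})\le r_1$), completeness gives $I\ne\emptyset$, whence $\emptyset\ne I\subseteq T$ and Alice wins. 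Conversely, if $X\setminus T$ is uncountable then, being Borel in a Polish space, it contains a perfect set $C$ by the perfect set property, and I would give Bob a winning strategy forcing $x_\infty\in C\subseteq X\setminus T$. At each stage Bob tracks a pair $a,b\in C$ and plays a ball of radius just above their enclosing radius $\rho_{ab}=\inf_x\max(d(x,a),d(x,b))$; since $\beta<1$, Alice's deletion of radius at most $\beta r_n<\rho_{ab}$ cannot contain both $a$ and $b$, so one survives with room, and Bob moves into a perfect neighborhood of the survivor inside $C$, shrinking the radii to $0$. The limit then lies in the closed set $C$, so $I=\{x_\infty\}\subseteq X\setminus T$ and Alice loses; hence $\beta$-BMM winning forces $X\setminus T$ to be countable. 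The only nonelementary input here is the perfect set property (which uses the Borel hypothesis); no appeal to determinacy is needed, since all strategies are explicit.
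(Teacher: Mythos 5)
The heart of your difficulty is exactly where you predicted it, and your proposal does not close it: in the BMS ($\Rightarrow$) direction, partitioning $X\setminus T$ by the stage $N(p)$ and the dyadic scale of $r_{N(p)}$ cannot by itself yield \emph{uniformly} $\beta$-porous pieces, because Alice's winning strategy is a function of the whole history of play. The hole that excludes $p$ is Alice's response to the particular play in which Bob aims at $p$; for a different point $p'$ in the same piece $E_m$, the history is different, so there is no reason that hole avoids $p'$, and worse, for a ball $B(x,r)$ that never occurs in any of these ``aimed'' plays there is no hole associated to $B$ at all. Uniform $\beta$-porosity demands a single hole inside \emph{every} sufficiently small ball that misses the \emph{entire} piece, i.e.\ it demands that the hole be a function of the ball alone. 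The paper supplies precisely this missing ingredient by invoking Schmidt's positional-strategy theorem \cite[Theorem 7]{Schmidt1}: Alice may be assumed to play a map $f$ of Bob's last ball only. One then sets $g(B)$ to be the hole determined by $f(B)$ and defines $K_m = X \setminus \bigcup \{ g(B) : B \text{ of radius} \le 1/m\}$; each $K_m$ is uniformly $\beta$-porous \emph{by construction}, and your ``Bob aims at $p$'' argument is used only to prove the soft inclusion $X\setminus T \subset \bigcup_m K_m$ (if $p$ lay outside every $K_m$, Bob could at every stage find an arbitrarily small ball $B$ with $p \in g(B)$ and play it, forcing the outcome $p \notin T$). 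Without positionality (or some substitute, e.g.\ an Oxtoby-type maximal-antichain construction, which you would then have to show produces \emph{quantitatively uniform} porosity rather than mere nowhere density) your deferred ``quantifier management'' step is not a technicality but the entire theorem.

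Two further comments. First, your BMM ($\Rightarrow$) argument is a genuinely different route from the paper's and appears sound: you use that an uncountable Borel set in a Polish space contains a perfect set $C$, and give Bob an explicit two-point tracking strategy in which Alice's deletion of radius $\beta r_n < \rho_{ab}$ cannot capture both tracked points of $C$; the paper instead reuses the positional strategy, showing $B \cap K_m$ has diameter zero via the chain $B\cap K_m = B(x_2,\beta^{1/2}r)\cap K_m = \cdots$ and then invoking separability. Your route is more self-contained game-theoretically and is the only place the Borel hypothesis is visibly used, at the cost of importing the perfect set property; but it cannot rescue the BMS direction, where the complement is uncountable and one needs quantitative porosity, not just a Cantor subset. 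Second, a small but genuine error in your BMS ($\Leftarrow$) direction: positivity of Bob's radii does \emph{not} force $x_\infty$ into the open ball $B^\circ(y,\beta r_k)$ --- in $\R$, with Alice's ball $B(0,1)$ and Bob replying $B(1/2,1/2)$, the limit point can be $1$, which lies on the sphere. The correct fix is the paper's: after Bob's reply $(x_{k+1},r_{k+1})$, Alice plays one extra concentric move $(x_{k+1},\beta r_{k+1})$, whose points $z$ satisfy $d(y,z) \le (\beta r_k - r_{k+1}) + \beta r_{k+1} < \beta r_k$, so the limit point lands strictly inside the deleted-from-$E_n$ region.
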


\begin{example}
The Cantor set $C\subset\R$ is uniformly $1/5$-porous,\Footnote{Let $B(x,r)\subset\R$ be a ball, and we will show that there exists $B^\circ(y,r/5)\subset B(x,r)$ such that $B^\circ(y,r/5) \cap C = \emptyset$. By a zooming argument, we can without loss of generality assume that $B(x,r)\cap [0,1/3]\neq \emptyset$ and $B(x,r)\cap [2/3,1]\neq \emptyset$. By a symmetry argument, we can without loss of generality assume that $x\geq 1/2$. If $r/5\leq 1/6$, then we let $y = 1/2$, and if $3r/5 \geq 1/2$, then we let $y = 1/2 + 4r/5$. Either way we get $B^\circ(y,r/5)\subset [1/3,x + r] \subset B(x,r)$ and $B^\circ(y,r/5) \cap C = \emptyset$.} so by Theorem \ref{theoremcharacterization}, $\R\butnot C$ is $1/5$-BMS winning.
\end{example}

A slightly more general example:

\begin{example}
Given $s\geq 0$, a closed set $K$ is called \emph{Ahlfors $s$-regular} if there exists a measure $\mu$ whose support equals $K$ and a constant $C > 0$ such that for all $x\in K$ and $0 < r \leq 1$,
\[
C^{-1} r^s \leq \mu(B(x,r)) \leq C r^s.
\]
If $K\subset \R^d$ is an Ahlfors $s$-regular set with $s < d$, or more generally if $K\subset X$ is Ahlfors $s$-regular, $X$ is Ahlfors $\delta$-regular, and $s < \delta$, then a simple calculation shows that $K$ is uniformly porous,\Footnote{Suppose otherwise, and let $\beta > 0$ be small. Then there exists a ball $B(x,r)$ such that every ball $B^\circ(y,\beta r) \subset B(x,r)$ intersects $K$. Let $A\subset B(x,r/2)$ be a maximal $3\beta r$-separated set. Since $X$ is Ahlfors $\delta$-regular, $\#(A) \geq C_1 \beta^{-\delta}$ for some constant $C_1 > 0$. For each $x\in A$ let $f(x)\in K$ be chosen so that $f(x)\in B^\circ(x,\beta r)$. Then $B = \{f(x) : x\in A\}$ is a $\beta r$-separated subset of $B(x,r)\cap K$, so since $K$ is Ahlfors $s$-regular, we have $\#(B) \leq C_2 \beta^{-s}$ for some constant $C_2 > 0$. Since $\#(A) = \#(B)$, this is a contradiction for sufficiently small $\beta$. Thus $K$ is uniformly porous. An alternate proof of this fact may be found in \cite[Lemma 3.12]{BHR}.} so by Theorem \ref{theoremcharacterization}, $T = X\butnot K$ is BMS-winning.
\end{example}

If $X$ is Ahlfors $\delta$-regular, then for every $\beta$ there exists $s_\beta < \delta$ such that every uniformly $\beta$-porous set $T$ has upper box-counting dimension $\leq s_\beta$ \cite[Theorem 4.7]{JJKRRS}. Since the Hausdorff and packing dimensions of a set are bounded above by its upper box dimension, we get the following corollary of Theorem \ref{theoremcharacterization}:

\begin{corollary}
If $X$ is Ahlfors $\delta$-regular and $T\subset X$ is BMS-winning, then the Hausdorff and packing dimensions of $X\butnot T$ are $<\delta$.
\end{corollary}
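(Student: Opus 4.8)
The plan is to assemble Theorem~\ref{theoremcharacterization}, the box-dimension bound of \cite[Theorem~4.7]{JJKRRS}, and the countable stability of Hausdorff and packing dimension. Since $T$ is BMS-winning, there is some (indeed, every sufficiently small) $\beta \in (0,1)$ for which $T$ is $\beta$-BMS winning; fix one such $\beta$. Applying the first part of Theorem~\ref{theoremcharacterization}, I would write
\[
X\butnot T = \bigcup_{n=1}^{\infty} E_n,
\]
a countable union of uniformly $\beta$-porous sets $E_n$.

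Next I would invoke the cited fact that, because $X$ is Ahlfors $\delta$-regular, there is a single exponent $s_\beta < \delta$ (depending only on $\beta$ and the regularity data, not on $n$) such that every uniformly $\beta$-porous subset of $X$ has upper box-counting dimension at most $s_\beta$. In particular $\overline{\dim}_{\mathrm B}(E_n) \leq s_\beta$ for every $n$. Since the Hausdorff and packing dimensions of any set are bounded above by its upper box dimension, this gives $\dim_{\mathrm H}(E_n) \leq s_\beta$ and $\dim_{\mathrm P}(E_n) \leq s_\beta$ for all $n$.

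Finally I would appeal to the fact that both Hausdorff and packing dimension are countably stable, i.e.\ the dimension of a countable union equals the supremum of the dimensions of its pieces. This yields $\dim_{\mathrm H}(X\butnot T) = \sup_n \dim_{\mathrm H}(E_n) \leq s_\beta < \delta$, and likewise $\dim_{\mathrm P}(X\butnot T) \leq s_\beta < \delta$, completing the argument.

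The only genuinely delicate point is that one must \emph{not} attempt to bound the upper box dimension of the union $X\butnot T$ directly: upper box-counting dimension is notoriously \emph{not} countably stable, so the uniform bound $s_\beta$ need not survive the countable union at the level of box dimension. The crux is therefore to apply the bound of \cite{JJKRRS} to each porous piece $E_n$ separately, transfer it to the (countably stable) Hausdorff and packing dimensions at that stage, and only then form the countable union. A minor bookkeeping point is whether the hypothesis that $T$ is Borel is required to invoke Theorem~\ref{theoremcharacterization}; if so, one either restricts to Borel $T$ or verifies that the ``only if'' direction used here goes through without it.
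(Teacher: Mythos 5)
Your proposal is correct and is essentially the paper's own argument: the paper likewise combines Theorem~\ref{theoremcharacterization} with \cite[Theorem~4.7]{JJKRRS} and the bound of Hausdorff/packing dimension by upper box dimension, with countable stability of Hausdorff and packing dimension used implicitly exactly where you use it explicitly. Your remark about applying the box-dimension bound piecewise (rather than to the union, where box dimension fails countable stability) is precisely the right reading of why the corollary is stated for Hausdorff and packing dimension rather than box dimension.
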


The following can be proven either using Theorem \ref{theoremcharacterization} or by a method similar to \cite[Theorem 2]{Schmidt1}.

\begin{corollary}
The intersection of countably many $\beta$-BMS (resp. $\beta$-BMM) winning sets is $\beta$-BMS (resp. $\beta$-BMM) winning.
\end{corollary}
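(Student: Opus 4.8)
The plan is to read off the corollary from the geometric characterization in Theorem~\ref{theoremcharacterization}, using the fact that in each game ``winning'' is equivalent to a smallness property of the complement that is visibly stable under countable unions, combined with the De Morgan identity $X\butnot\bigcap_i T_i = \bigcup_i(X\butnot T_i)$. So the whole content of the argument is that the relevant class of ``small'' sets forms a $\sigma$-ideal-like family, after which there is nothing left to do.

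Concretely, suppose $(T_i)_{i=1}^\infty$ are Borel and $\beta$-BMS winning. For each $i$ I would write $X\butnot T_i = \bigcup_j E_{i,j}$ with each $E_{i,j}$ uniformly $\beta$-porous in the sense of Definition~\ref{definitionporous}. Then $X\butnot\bigcap_i T_i = \bigcup_{i,j}E_{i,j}$ is again a countable union of uniformly $\beta$-porous sets, so the ``if'' direction of Theorem~\ref{theoremcharacterization} yields that $\bigcap_i T_i$ is $\beta$-BMS winning. The BMM case is even shorter: if each $T_i$ is $\beta$-BMM winning then each $X\butnot T_i$ is countable, hence so is $X\butnot\bigcap_i T_i$, and the theorem applies again; note that once the complement is countable the intersection is automatically Borel, so no further hypothesis is needed to re-enter the theorem. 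The only place the Borel assumption is genuinely used is the forward implication of Theorem~\ref{theoremcharacterization}, which is why I would run the BMS half of the argument below directly whenever I want to avoid it.

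As the remark before the corollary suggests, one can instead argue by interleaving winning strategies in the spirit of \cite[Theorem~2]{Schmidt1}. I would fix winning strategies $\sigma_i$ for Alice in the games with targets $T_i$, fix a schedule assigning to each round $n$ an index $f(n)$ with every index used infinitely often, and have Alice play at round $n$ the move dictated by $\sigma_{f(n)}$, where each $\sigma_i$ reads the balls $\omega_{n_k}$ of the rounds $n_1<n_2<\cdots$ assigned to it as the moves of one auxiliary game. Transitivity of $\leq_s$ shows that this subsequence is a legal play of the $i$-th game following $\sigma_i$; in the BMM case one must additionally carry the formal-disjointness condition \eqref{absolutewinningcontainment} across the intervening rounds, which is a short triangle-inequality computation. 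I expect the main obstacle to sit precisely in the BMM case. In the BMS game Alice forces a factor-$\beta$ contraction at \emph{every} round, so the radii tend to $0$, the intersection $I \df \bigcap_n B(\omega_n)$ is a single point, and that point lies in every $T_i$; the direct proof then works unconditionally and delivers $\bigcap_i T_i$ as $\beta$-BMS winning with no Borel hypothesis. In the BMM game, however, Bob plays Banach--Mazur and need not contract, so $I$ can be uncountable and the interleaving argument only produces $I\cap T_i\neq\emptyset$ for each $i$ \emph{separately}, not a single point of $I$ lying in all the $T_i$ at once. Closing that gap is exactly what the countable-complement statement of Theorem~\ref{theoremcharacterization} does for free, which is the reason I would present the characterization-based argument as the primary proof for the BMM half.
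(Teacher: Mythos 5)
Your proposal is correct and is essentially the paper's own argument: the paper gives no written proof, saying only that the corollary "can be proven either using Theorem \ref{theoremcharacterization} or by a method similar to \cite[Theorem 2]{Schmidt1}," and your De Morgan argument on the complements (a countable union of countable unions of uniformly $\beta$-porous sets is again such a union; a countable union of countable sets is countable) is precisely the first of these two routes. Your supplementary observation is also accurate and worth noting: the Schmidt-style interleaving works unconditionally for the BMS half because Alice's forced factor-$\beta$ contraction makes the outcome a single point, whereas in the BMM game the outcome set can be a ball of positive radius, so meeting each $T_i$ separately does not yield a common point, which is exactly why the characterization is the cleaner route for the BMM half.
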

~

\begin{proof}[Proof of Theorem \ref{theoremcharacterization}]~

($\Rightarrow$): Suppose Alice has a winning strategy. By \cite[Theorem 7]{Schmidt1}, she has a \emph{positional} winning strategy, i.e. a map $f$ which inputs a move of Bob and tells her what she should do next. Let $\BB$ denote the set of balls in $X$, so that $f:\BB\to\BB$ and if $f(B(x,r)) = B(y,s)$, then $s = \beta r$. Let
\[
g(B(x,r)) = \begin{cases}
B^\circ(y,s) & \text{BMS game}\\
B^\circ(x,r)\butnot B(y,s) & \text{BMM game}
\end{cases}.
\]
For each $m\in\N$, let
\[
K_m = X \butnot \bigcup_{\substack{B = B(x,r)\in\BB \\ 0 < r \leq 1/m}} g(B).
\]
\begin{claim}
$X\butnot T\subset \bigcup_{m\in\N} K_m$.
\end{claim}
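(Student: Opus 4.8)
The plan is to prove the contrapositive: assuming $z \notin \bigcup_{m} K_m$, I will show $z \in T$, so that every $z \in X\butnot T$ must lie in some $K_m$. By the definition of the sets $K_m$, the hypothesis $z \notin \bigcup_m K_m$ unpacks to say that for every $m \in \N$ there is a ball $B_m = B(x_m, r_m) \in \BB$ with $0 < r_m \leq 1/m$ and $z \in g(B_m)$. The idea is to feed these balls to Bob, one at a time and with $m$ growing, so as to manufacture a single play of the game that is consistent with Alice's positional strategy $f$ and whose outcome is exactly $z$; since $f$ is winning, this forces $z \in T$.

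First I would record what $z \in g(B_m)$ says in each game. In the BMS game $g(B_m) = B^\circ(f(B_m))$, and since $f(B_m) \le_s B_m$ gives the inclusion $B(f(B_m)) \subset B(x_m,r_m)$, the condition $z \in g(B_m)$ yields both $d(z, x_m) \le r_m \le 1/m \to 0$ and that $z$ lies in the interior of Alice's reply to $B_m$. In the BMM game $g(B_m) = B^\circ(x_m, r_m)\butnot B(f(B_m))$, which directly gives $z \in B^\circ(x_m, r_m)$ (again $x_m \to z$) together with $z \notin B(f(B_m))$, i.e. $z$ escapes the ball Alice would delete.

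Next I would build the play inductively, maintaining the invariant that after Alice's $n$-th move $\omega'_n = f(\omega_n)$ the point $z$ lies strictly inside Alice's ball (BMS), respectively stays in Bob's open ball while lying strictly outside Alice's deleted ball (BMM). Bob's $(n+1)$-st move will be some $B_{m'}$ with $m'$ chosen large, the point being that $r_{m'} \le 1/m'$ is then smaller than half of each relevant positive gap supplied by the invariant. In the BMS game a triangle-inequality estimate shows $r_{m'} + d(y_n, x_{m'}) \le s_n$ once $r_{m'}$ is at most half of $s_n - d(y_n, z) > 0$, legalizing $B_{m'} \le_s \omega'_n$; in the BMM game the same type of estimate, applied both to the nesting $\omega_{n+1} \le_s \omega_n$ and to the formal disjointness $d(y_n, x_{n+1}) \ge \beta r_n + r_{n+1}$, is legalized once $r_{m'}$ is below half of each of the two gaps $r_n - d(x_n, z) > 0$ and $d(z, y_n) - \beta r_n > 0$. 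In either case $z \in g(B_{m'})$ reinstates the invariant for the next round, and Bob simultaneously arranges $r_{m'} \to 0$ along the play (in the BMS game the relation $\le_s \omega'_n$ already forces geometric decay).

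Finally I would read off the outcome: because the radii tend to $0$ and $z$ lies in every one of the nested balls, the intersection is the singleton $\{z\}$, and since the play follows Alice's winning strategy $f$, the winning condition forces $z \in T$. I expect the only real work to be the inductive step, specifically the verification that the abundance of arbitrarily small admissible balls $B_m$ suffices to meet, simultaneously, the ordering constraint (BMS) or the ordering-plus-disjointness constraints (BMM) while keeping $z$ interior. This is precisely where the strictness of the gaps guaranteed by the invariant, together with $r_m \to 0$, is used, and it is the crux of the argument.
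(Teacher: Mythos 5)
Your proof is correct and follows essentially the same route as the paper: the paper argues by contradiction, having Bob repeatedly play arbitrarily small balls $B$ with the point in $g(B)$ (available because the point avoids every $K_m$), and checks by induction that such moves are always legal precisely via the open ``gap'' furnished by membership in $g$ of the previous ball. Your contrapositive phrasing, explicit gap estimates, and the observation that the radii can be driven to zero so the outcome is exactly the given point are just a more detailed spelling-out of that same argument.
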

\begin{subproof}
By contradiction, suppose $p\in (X\butnot T)\butnot\bigcup_{m\in\N} K_m$.
We claim that Bob can beat Alice's strategy by using the following counter-strategy: always choose a ball $B\in\BB$ such that $p\in g(B)$.
Obviously, if he can successfully apply this strategy then this is a contradiction, since then the intersection point will be $p\in X\butnot T$, a win for Bob, but Alice's strategy was supposed to be a winning strategy.
We prove by induction that he can apply the strategy.
If he applied it to choose his previous move $B_n$, then $p\in g(B_n)$, and from the definition of $g$, this guarantees the existence of a neighborhood $B(p,2/m)$ of $p$ such that any ball contained in $B(p,2/m)$ constitutes a legal move for Bob.
Such a neighborhood also exists if it is the first turn and no one has made a move.
Now since $p\notin K_m$, there exists $B = B(x,r)\in\BB$ with $0 < r \leq 1/m$ and $p\in g(B)$.
Then $B$ constitutes a legal move for Bob, since $p\in g(B) \subset B$ and thus $B \subset B(p,2/m)$.
\end{subproof}

Now if Alice and Bob are playing the BMS game, then for every $B = B(x,r)\in\BB$ such that $0 < r\leq 1/m$, we have $g(B) \cap K_m = \emptyset$, so by definition, $K_m$ is uniformly $\beta$-porous, completing the proof.

On the other hand, suppose that Alice and Bob are playing the BMM game.
Then for every  $B = B(x,r)\in\BB$ such that $0 < r\leq \beta^{1/2}/m$, we have $g(B(x,\beta^{-1/2}r)) \cap K_m = \emptyset$, so $B\cap K_m = B(x_2,\beta^{1/2}r)\cap K_m$ for some $x_2\in X$. Continuing this process we get a sequence $(x_k)_1^\infty$ with $x_1 = x$ such that
\[
B\cap K_m = B(x_2,\beta^{1/2}r)\cap K_m = B(x_3,\beta r)\cap K_m = \cdots
\]
So $\diam(B\cap K_m) = 0$ and thus $B\cap K_m$ is either empty or a singleton. Since $X$ is separable, this implies that $K_m$ is countable, completing the proof.

($\Leftarrow$): Suppose that Alice and Bob are playing the BMS game and that $X\butnot T = \bigcup_1^\infty E_n$, where each $E_n$ is uniformly $\beta$-porous. For each $n$, Alice can avoid the set $E_n$ in a finite number of moves as follows: make dummy moves until Bob's radius is smaller than the $r_0$ which occurs in Definition \ref{definitionporous}, then make the move $B(y,\beta r) \subset B(x,r)$, where $x$, $y$, and $r$ are as in Definition \ref{definitionporous}, then make one more move to avoid the set $B(y,\beta r)\butnot B^\circ(y,\beta r)$. By avoiding each set $E_n$ in turn, Alice can ensure that the intersection point is in $T$.

On the other hand, suppose that Alice and Bob are playing the BMM game and that $X\butnot T$ is countable. If $(x'_n)_1^\infty$ is an enumeration of $X\butnot T$, then let Alice's $n$th move be $\omega'_n = (x'_n,r'_n)$ for some legal $r'_n$. This ensures that the intersection point is in $T$.
\end{proof}

\section{Application to Diophantine approximation}

Recall that the \emph{exponent of irrationality} of a vector $\xx\in\R^d$ is the number
\[
\omega(\xx) = \limsup_{\pp/q\in\Q^d} \frac{-\log\|\xx - \pp/q\|}{\log(q)},
\]
where the limsup is taken along any enumeration of $\Q^d$. The set
\[
\{\xx\in\R^d : \omega(\xx) = 1 + 1/d\}
\]
is of full Lebesgue measure and is winning for Schmidt's game, while the set
\[
\{\xx\in\R^d : \omega(\xx) = \infty\}
\]
is comeager, so it is winning for the Banach--Mazur game. A natural question is whether their union is winning for the BMS game. The following result shows that the answer is no:

\begin{theorem}
Let $\psi:\N\to (0,\infty)$ be a decreasing function such that $q^{1 + 1/d}\psi(q) \to 0$, and let
\begin{equation}
\label{liminf}
S = \left\{\xx\in\R^d : 0 < \liminf_{\pp/q\in\Q^d} \frac{\|\xx - \pp/q\|}{\psi(q)} < \infty\right\}.
\end{equation}
Then for every $\beta$, Bob has a strategy to ensure that the intersection point is in $S$. In particular, $X\butnot S$ is not BMS-winning.
\end{theorem}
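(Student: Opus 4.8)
The plan is to exhibit an explicit strategy for Bob in the $\beta$-BMS game with target set $\R^d\butnot S$, forcing the unique intersection point $\xx_\infty$ to satisfy $0 < \liminf_{\pp/q}\|\xx_\infty - \pp/q\|/\psi(q) < \infty$. The key structural observation is that in the BMS game the players' balls are nested, $B(\omega_{n+1})\subset B(\omega'_n)\subset B(\omega_n)$, so every geometric commitment Bob makes is \emph{permanent}: once some ball is contained in, or disjoint from, a fixed set, all later balls are too, and hence so is $\xx_\infty$. Alice, who contracts by the fixed ratio $\beta$, cannot reverse such commitments; she only forces the radius to decay, $r_{n+1}\leq \beta r_n$, which guarantees the game visits every scale. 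Thus Bob's task reduces to arranging, over the course of the game, two families of commitments yielding a lower bound $\liminf\geq c$ and an upper bound $\liminf\leq C$ for suitable constants $0<c\leq C<\infty$.

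For the lower bound, call $B(\pp/q,c\psi(q))$ the \emph{super-good ball} of $\pp/q$; Bob secures $\liminf\geq c$ once $\xx_\infty$ lies outside all but finitely many super-good balls. I would organize the dodging by scale: when the radius first reaches $r$, the only rationals whose super-good balls are comparable to or larger than the current ball are those with $\psi(q)$ at least of order $r$, i.e. $q\leq Q$ with $\psi(Q)\approx r$. The crucial input is a counting estimate: the number of rationals $\pp/q$ with $q\leq Q$ lying within distance $O(r)$ of the ball is at most $C_d(1+r^dQ^{d+1})$, and since $\psi(Q)\approx r$ the hypothesis $q^{1+1/d}\psi(q)\to 0$ gives $r^dQ^{d+1}\approx \psi(Q)^dQ^{d+1}\to 0$. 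Hence only $O(1)$ super-good balls (with a constant depending only on $d$) are dangerous at each scale, each of radius at most $cr$. Choosing $c$ small, depending on $\beta$ and this dimensional constant, so that these finitely many balls cover only a small fraction of the ball, Bob can in a single move pass to a sub-ball disjoint from all of them, before any of them can engulf the shrinking ball; by permanence the disjointness is locked in forever. Handling the scales in turn yields $\xx_\infty\notin\bigcup_{q\geq Q_0}B(\pp/q,c\psi(q))$, so $\liminf\geq c>0$.

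For the upper bound Bob must force infinitely many approximations of quality $\psi$. Infinitely often he selects a denominator $q_k\to\infty$ with $q_k\geq 1/r$ and $\psi(q_k)$ much smaller than $r$ (possible because $\psi(q)=o(q^{-(1+1/d)})$ forces $\psi(q)=o(r)$ once $q\approx 1/r$), together with a rational $\pp_k/q_k$ lying well inside the current ball that is itself not super-well approximated by any smaller denominator; such a rational exists because the super-good balls of smaller denominators cover only a small fraction of the ball, by the same counting estimate. Bob then places his ball inside the annulus $\{\,c\psi(q_k)\leq \|\cdot-\pp_k/q_k\|\leq C\psi(q_k)\,\}$, which by permanence forces $c\leq \|\xx_\infty-\pp_k/q_k\|/\psi(q_k)\leq C$, hence $\liminf\leq C<\infty$, while remaining consistent with the lower-bound commitments. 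Interleaving these (sparse) trapping moves with the dodging moves, and noting that the finitely many small-denominator rationals do not affect the $\liminf$, gives $\xx_\infty\in S$.

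The main obstacle is the lower bound, and specifically the counting estimate together with the scheduling it feeds. One must verify that $C_d(1+r^dQ^{d+1})$ is the correct count of low-denominator rationals in a ball: the naive per-denominator and separation bounds are too weak in dimension $d\geq 2$, and it is precisely the sharp bound (proved, say, by estimating lattice points in the convex cone $\{(\yy,t): 0<t\leq Q,\ \|\yy-t\zz\|\leq tr\}$) that keeps the number of dangerous balls \emph{bounded} rather than growing. One then has to check that Bob's forced contraction rate $\beta$ still leaves him enough moves at each scale to dodge the $O(1)$ dangerous balls before engulfment, and that the trapping moves can be inserted without disturbing the dodging schedule. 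These are the steps I expect to require the most care; the permanence of commitments is what prevents Alice from undoing the geometry once it is arranged.
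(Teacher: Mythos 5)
Your overall architecture---exploiting nestedness/permanence in the BMS game to force the intersection point outside all but finitely many balls $B(\pp/q,c\psi(q))$ while inside infinitely many slightly enlarged balls---is the same as the paper's strategy. The genuine gap is the counting estimate on which your lower-bound (dodging) step rests: the claim that a ball of radius $O(r)$ contains at most $C_d(1+r^dQ^{d+1})$ rational points of denominator $q\leq Q$ is \emph{false} for $d\geq 2$, because rational points do not obey the volume heuristic; they concentrate on rational affine subspaces. Concretely, take $d=2$ and $r=Q^{-3/2}$, so that $r^dQ^{d+1}=1$ and your bound reads $O(1)$. There are $\asymp Q^2$ Farey fractions of denominator at most $Q$ in $[0,1]$, so by pigeonhole some interval $I\subset[0,1]$ of length $r$ contains at least $c\,rQ^2=c\,Q^{1/2}$ of them; the ball of radius $r$ in $\R^2$ centered at $(x_I,0)$, where $x_I$ is the midpoint of $I$, then contains at least $c\,Q^{1/2}$ rational points $(p/q,0)\in\Q^2$ of denominator $q\leq Q$---unboundedly many, not $O(1)$. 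The same example defeats your proposed proof via lattice points in the cone $\{(\yy,t):0<t\leq Q,\ \|\yy-t\zz\|\leq tr\}$: that cone has volume $\asymp r^2Q^3=O(1)$ but contains $\asymp rQ^2$ lattice points, all lying in the sublattice $\{y_2=0\}$; thin convex bodies are not counted by their volumes. With polynomially many (rather than $O(1)$) dangerous super-good balls of radius comparable to $cr$ at a single scale, your ``dodge them all in one move'' step has no justification: for all the argument knows, they could form a net blocking every sub-ball of comparable size.

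What rescues the argument is not a count but a structure theorem, and this is exactly the paper's key tool: the Simplex Lemma \cite[Lemma 4]{KTV} states that all rationals $\pp/q$ with $q\leq c_d r^{-d/(d+1)}$ lying in $B(\xx,2r)$ are contained in a \emph{single affine hyperplane} $\LL$. Since $\psi(Q)\approx r$ together with $q^{1+1/d}\psi(q)\to 0$ forces $Q\ll r^{-d/(d+1)}$, this covers every denominator you call dangerous (in the counterexample above, the $Q^{1/2}$ points indeed all lie on the line $y_2=0$). Bob then dodges them all at once, not by counting them, but by choosing a sub-ball of radius $r/3$ disjoint from the $(r/3)$-thickening of $\LL$; any rational whose ball still meets his new ball must have denominator at least $c_dr^{-d/(d+1)}$, hence a ball of size negligible compared to $r$. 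The paper's induction (its notion of ``good'' ball) then picks the rational $\pp_0/q_0$ of \emph{minimal} remaining denominator and has Bob move to a ball of radius $3\beta^{-1}\psi(q_0)$ touching $B(\pp_0/q_0,\psi(q_0))$: this single move performs your ``trapping'' step (forcing the point into $B(\pp_0/q_0,(1+6\beta^{-1})\psi(q_0))$, whence $\liminf<\infty$) and simultaneously propagates the induction hypothesis that keeps the $\liminf$ positive. Note that your trapping step also leans on the same false estimate (to find a rational not super-well approximated by smaller denominators), so it too needs this repair. In short: replace the counting estimate by the Simplex Lemma and organize the dodging around avoiding a hyperplane-thickening, and your outline becomes the paper's proof; without that replacement, the argument does not go through in dimension $d\geq 2$.
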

\begin{corollary}
The set \eqref{liminf} cannot be written as the union of countably many uniformly $\beta$-porous sets for any $0 < \beta < 1$.
\end{corollary}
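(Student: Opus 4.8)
The plan is to derive the Corollary directly from the preceding Theorem together with the characterization in Theorem \ref{theoremcharacterization}; no new game-theoretic work is needed, only a contrapositive and a measurability check. The idea is that the Theorem exhibits, for each $\beta$, a Bob strategy forcing the intersection point into $S$, which is precisely the assertion that $X \butnot S$ fails to be $\beta$-BMS winning, and Theorem \ref{theoremcharacterization} converts this failure into the impossibility of decomposing $S$ into countably many uniformly $\beta$-porous pieces.

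First I would verify that $S$ is Borel, so that Theorem \ref{theoremcharacterization} applies to $T \df X \butnot S$. Fixing an enumeration of $\Q^d$ and setting $L(\xx) \df \liminf_{\pp/q \in \Q^d} \|\xx - \pp/q\|/\psi(q)$, each term $\xx \mapsto \|\xx - \pp/q\|/\psi(q)$ is continuous, so $L$ is a $\liminf$ of countably many continuous functions and is therefore Borel measurable; hence $S = \{\,\xx : 0 < L(\xx) < \infty\,\}$ and its complement $X \butnot S$ are Borel.

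Next I would unwind the definitions. Since Alice plays by Schmidt's rules in the BMS game, her radii shrink by a factor at most $\beta$ each round, so the nested balls intersect in a single point $x_\infty$; taking the target set to be $X \butnot S$, a play with $x_\infty \in S$ is a loss for Alice. Thus the Theorem's conclusion that for every $\beta \in (0,1)$ Bob can force $x_\infty \in S$ says exactly that $X \butnot S$ is not $\beta$-BMS winning for any $\beta \in (0,1)$. Applying the contrapositive of the ($\Leftarrow$) implication of Theorem \ref{theoremcharacterization} to the Borel set $T = X \butnot S$, whose complement is $X \butnot T = S$, then gives that $S$ is not a countable union of uniformly $\beta$-porous sets. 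As $\beta$ ranges over $(0,1)$, this is the Corollary.

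The argument is short once the Theorem is in hand, and I expect the main (though still routine) point to be the bookkeeping: one must use the Theorem's ``for every $\beta$'' clause in full, rather than the weaker ``in particular'' statement that $X \butnot S$ is not BMS-winning, because the Corollary asserts the conclusion for \emph{all} $\beta \in (0,1)$ and not merely for small $\beta$. The only other thing requiring care is the Borel hypothesis of Theorem \ref{theoremcharacterization}, which is exactly the measurability check carried out above.
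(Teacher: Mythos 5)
Your proposal is correct and is precisely the argument the paper intends: the corollary is stated without proof because it follows immediately from the theorem (Bob's strategy shows $X\butnot S$ is not $\beta$-BMS winning for any $\beta\in(0,1)$, since Alice and Bob cannot both have winning strategies) combined with the contrapositive of the ($\Leftarrow$) direction of Theorem \ref{theoremcharacterization} applied to the Borel set $T = X\butnot S$. Your attention to the two points the paper leaves implicit --- the Borel measurability of $S$ and the need for the ``for every $\beta$'' clause rather than the weaker ``in particular'' statement --- is exactly the right bookkeeping.
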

Note that for any $c > 1 + 1/d$, the set $\{\xx\in\R^d : \omega(\xx) = c\}$ contains a set of the form \eqref{liminf}, so it also cannot be written as the union of countably many uniformly $\beta$-porous sets.
\begin{proof}
Fix $0 < \beta < 1$. For each $\pp/q\in\Q^d$, write
\[
B(\pp/q) = B(\pp/q,\psi(q)), \;\; B'(\pp/q) = B(\pp/q,(1 + 6\beta^{-1})\psi(q)).
\]
We will give a strategy for Bob to force the intersection point to lie in infinitely many of the sets $B'(\pp/q)$, but only finitely many of the sets $B(\pp/q)$. Accordingly, we fix $Q_0\in\N$ large to be determined, and we call a ball $A = B(\xx,r)\subset\R^d$ \emph{good} if for every $\pp/q\in\Q^d$ such that $A\cap B(\pp/q)\neq\emptyset$ and $q\geq Q_0$, we have
\begin{equation}
\label{psiqbound}
\psi(q) \leq r/3.
\end{equation}
Intuitively, if $A$ is a good ball then Bob should still be able to win and avoid all of the sets $B(\pp/q)$, after Alice has just played $A$.

\begin{claim}
\label{claimgood}
If $A = B(\xx,r)$ is a good ball, then there exists a ball $B = B(\yy,s)\subset A$ such that $B\subset B'(\pp_0/q_0)$ for some $\pp_0/q_0\in\Q^d$ with $\psi(q_0) < r$, and such that for every $\pp/q\in\Q^d$ such that $B\cap B(\pp/q)\neq\emptyset$ and $q\geq Q_0$, we have
\[
\psi(q) \leq \beta s/3.
\]
\end{claim}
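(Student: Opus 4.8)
Here is how I would approach the claim.

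\textbf{Setting up the target rational and the ball $B$.} Since $A=B(\xx,r)$ is good, I would first fix the denominator $q_0$ by two competing requirements. On one hand I want a rational of denominator $q_0$ to lie deep inside $A$: rounding each coordinate of $\xx$ to the nearest multiple of $1/q_0$ produces some $\pp_0/q_0$ with $\|\xx-\pp_0/q_0\|\leq\sqrt d/(2q_0)$, so as soon as $q_0\geq\sqrt d/r$ we get $\pp_0/q_0\in B(\xx,r/2)$. On the other hand I want $\psi(q_0)$ tiny compared with $r$; here the hypothesis $q^{1+1/d}\psi(q)\to0$ is exactly what forces $\psi(q_0)\ll r$, and I would take $q_0$ large enough that $(1+6\beta^{-1})\psi(q_0)<r/2$, which in particular gives $\psi(q_0)<r$ as the claim demands. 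I then set $s=3\beta^{-1}\psi(q_0)$ and look for $B=B(\yy,s)$ with $\yy$ on the sphere $\|\yy-\pp_0/q_0\|=(1+3\beta^{-1})\psi(q_0)$. Any such $B$ reaches inward exactly to $\psi(q_0)$ and outward to $(1+6\beta^{-1})\psi(q_0)$, so $B\subset B'(\pp_0/q_0)$, $B$ is disjoint from the open ball $B^\circ(\pp_0/q_0,\psi(q_0))$, and (using $\|\pp_0/q_0-\xx\|\leq r/2$ and the bound on $\psi(q_0)$) $B\subset A$. Because $\beta s/3=\psi(q_0)\geq\psi(q)$ for every $q\geq q_0$, the inequality $\psi(q)\leq\beta s/3$ required in the claim holds automatically for all $q\geq q_0$. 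Thus the whole content of the claim reduces to choosing $\yy$ so that $B$ is disjoint from every $B(\pp/q)$ with $Q_0\leq q<q_0$.

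\textbf{Avoiding the lower-order balls.} The reason such a $\yy$ should exist is arithmetic separation: any two distinct rationals of denominator $\leq q_0$ differ by at least $1/q_0^2$, and those of a common denominator $q$ are $1/q$-separated, while goodness of $A$ forces every ball $B(\pp/q)$ meeting $A$ with $q\geq Q_0$ to have radius $\psi(q)\leq r/3$. I would estimate how much of the shell around $\pp_0/q_0$ the dilated balls $B(\pp/q,\psi(q)+s)$, $Q_0\leq q<q_0$, can cover, using the decay hypothesis in the forms $(q\psi(q))^d=o(q^{-1})$ and $q_0\psi(q_0)\to0$ to bound both the number of relevant centers at each denominator and the size of the corresponding balls. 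To gain room I would allow $q_0$ to be a large constant multiple of $1/r$, so that there are $\gg1$ rationals of denominator $q_0$ available in $B(\xx,r/2)$, and select $\pp_0/q_0$ (and then $\yy$ on its shell) to sit far from the lower-denominator rationals. The resulting $B=B(\yy,s)$ then satisfies every requirement of the claim, and this is exactly the inductive step feeding Bob's strategy: after Bob plays $B$, any ball of radius $\beta s$ that Alice plays inside $B$ is again good, since its intersecting balls with $q\geq Q_0$ inherit the bound $\psi(q)\leq\beta s/3$.

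\textbf{Where the difficulty lies.} The genuinely delicate point is the covering estimate of the second paragraph. A priori the intermediate balls $B(\pp/q)$, $Q_0\leq q<q_0$, can individually be as large as $r/3$ and collectively numerous, and indeed for borderline $\psi$ a crude volume bound on their union need not be a small fraction of $A$; so measure alone does not show they miss the small shell on which $\yy$ must lie. The crux is therefore to exploit that $\psi(q)$ decays \emph{strictly faster} than the Dirichlet rate $q^{-(1+1/d)}$, together with the quantitative separation of rationals of bounded denominator: this is what makes the lower-order rationals too sparse, relative to their mutual spacing, to fill the shell at the scale $\psi(q_0)$ on which $B$ is built, and hence guarantees a legal position for $\yy$. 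I expect this sparsity-versus-covering argument, rather than the bookkeeping around $s$, $B'(\pp_0/q_0)$, and the existence of $\pp_0/q_0$, to be the substantive part of the proof.
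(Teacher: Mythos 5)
Your bookkeeping is correct as far as it goes: with $s=3\beta^{-1}\psi(q_0)$ and $\yy$ on the shell around $\pp_0/q_0$, the containments $B\subset B'(\pp_0/q_0)$ and $B\subset A$ and the bound $\psi(q)\leq\beta s/3$ for $q\geq q_0$ all come for free, so everything hinges on keeping $B$ clear of the balls $B(\pp/q)$ with $Q_0\leq q<q_0$. But that step --- which you yourself identify as the substantive part --- is left as an expectation, and the two tools you propose for it do not suffice. The separation bound is too weak in every dimension: a rational $\pp/q$ with $q\ll q_0$ need only be at distance $1/(qq_0)$ from $\pp_0/q_0$, while its ball has radius $\psi(q)$, constrained only to be $o(q^{-(1+1/d)})$; for instance with $q\sim q_0^{1/2}$ one has $1/(qq_0)\sim q_0^{-3/2}$ while $\psi(q)$ may be of order $q_0^{-(d+1)/(2d)}$, vastly larger, so a single intermediate ball can engulf the entire shell around any given candidate $\pp_0/q_0$. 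This forces your fallback of selecting $\pp_0/q_0$ among the roughly $C^d$ rationals of denominator $q_0\sim C/r$ in $B(\xx,r/2)$; but the natural count of candidates killed by the intermediate balls, $\sum_{Q_0\leq q<q_0}(rq)^d(\psi(q)q_0)^d$, is of order $\epsilon^d C^d\log(q_0/Q_0)$ where $\epsilon=\sup_{q\geq Q_0}q^{1+1/d}\psi(q)$, and since the claim must be applied at every stage of the game with $Q_0$ (hence $\epsilon$) fixed and $r\to 0$, the factor $\log(q_0/Q_0)\sim\log(C/(rQ_0))$ diverges. So neither separation nor counting closes the argument; the gap is genuine.

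The missing idea in the paper's proof is structural rather than metric: the Simplex Lemma \cite[Lemma 4]{KTV}, which says that all rationals $\pp/q\in B(\xx,2r)$ with $q<c_d r^{-d/(d+1)}$ lie on a \emph{single affine hyperplane} $\LL$. Bob plays inside a sub-ball $\w A=B(\w\xx,r/3)\subset A$ chosen disjoint from the $r/3$-thickening of $\LL$; goodness of $A$ then forces every rational ball meeting $\w A$ with $q\geq Q_0$ to have its center within $r/3$ of $\w A$, hence off $\LL$, hence $q\geq c_d r^{-d/(d+1)}$. The second departure from your plan is that disjointness from lower-order balls is never proved directly: the paper takes $\pp_0/q_0$ of \emph{minimal} denominator among rationals whose balls meet $\w A$ (so by definition no ball with $Q_0\leq q<q_0$ meets $\w A$ at all), sets $s=3\beta^{-1}\psi(q_0)$ --- which is $<r/3$ precisely because $q_0\geq c_d r^{-d/(d+1)}$ and $q^{1+1/d}\psi(q)\to 0$ --- and places $B\subset\w A$ touching $B(\pp_0/q_0)$; then any ball meeting $B$ with $q\geq Q_0$ has $q\geq q_0$ by minimality, giving $\psi(q)\leq\psi(q_0)=\beta s/3$. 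In short: rather than fixing $q_0\sim 1/r$ in advance and fighting the intermediate denominators, let the geometry pick $q_0$ minimally and use the Simplex Lemma only to certify that this minimal $q_0$ is large. That substitution is what your argument is missing.
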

In other words, if Alice's previous choice was good, then Bob can move so that Alice's next choice must be good, while at the same time moving sufficiently close to a rational point.
\begin{subproof}
By the Simplex Lemma \cite[Lemma 4]{KTV}, there exists an affine hyperplane $\LL\subset\R^d$ such that for all $\pp/q\in\Q^d\cap B(\xx,2r)\butnot\LL$, we have
\begin{equation}
\label{simplex}
q \geq c_d r^{-d/(d + 1)},
\end{equation}
where $c_d > 0$ is a constant depending on $d$. Choose a ball $\w A = B(\w\xx,r/3)\subset A\butnot\thickvar\LL{r/3}$, where $\thickvar\LL t = \{\xx\in\R^d : \dist(\xx,\LL) \leq t\}$ is the closed $t$-thickening of $\LL$. Note that for all $\pp/q\in\Q^d$, if $\w A\cap B(\pp/q)\neq\emptyset$ and $q\geq Q_0$, then $A\cap B(\pp/q)\neq\emptyset$, so by \eqref{psiqbound}, $\dist(\pp/q,\w A) \leq \psi(q) \leq r/3$. Thus $\pp/q\in \thickvar{\w A}{r/3} \subset B(\xx,2r)\butnot \LL$, so \eqref{simplex} holds.

Let $\pp_0/q_0\in\Q^d$ be chosen to minimize $q_0$, subject to the conditions $\w A\cap B(\pp_0/q_0)\neq\emptyset$ and $q_0\geq Q$. Let $s = 3\beta^{-1}\psi(q_0)$. Since $q^{1 + 1/d} \psi(q) \to 0$, if $Q_0$ is sufficiently large then \eqref{simplex} implies $s < r/3$. Thus there exists a ball $B = B(\yy,s) \subset \w B$ such that $B\cap B(\pp_0/q_0) \neq \emptyset$ and thus $B\subset B'(\pp_0/q_0)$. Now if $\pp/q\in\Q^d$ satisfies $B\cap B(\pp/q)\neq\emptyset$ and $q\geq Q_0$, then $q\geq q_0$, so
\[
\psi(q) \leq \psi(q_0) = \beta s/3.
\varqedhere\]
\end{subproof}
By choosing $Q_0$ sufficiently large, we can guarantee that the ball $B(\0,1)$ is good. Let Bob's strategy consist of responding to Alice's moves $A$ with the balls $B$ given in Claim \ref{claimgood}, letting $A = B(\0,1)$ for the first move. Then by induction, Alice's moves will always be good, which implies that the intersection point $\zz$ is not contained in any of the balls $B(\pp/q)$. But by construction, $\zz$ is contained in infinitely many balls $B(\pp_0/q_0)$. Thus $\zz\in S$, where $S$ is as in \eqref{liminf}.
\end{proof}

A natural point of comparison for the exponent of irrationality function is the \emph{Lagrange spectrum function}
\[
L(\xx) = \liminf_{\pp/q\in\Q^d} \frac{\|\xx - \pp/q\|}{q^{1 + 1/d}}\cdot
\]
While the condition $\omega(\xx) > 1 + 1/d$ is equivalent to $\xx$'s being very well approximable, the condition $L(\xx) > 0$ is equivalent to $\xx$'s being badly approximable. We have shown above that the levelsets of the exponent of irrationality function cannot be written as the countable union of uniformly $\beta$-porous sets for any $0 < \beta < 1$. To contrast this we prove:

\begin{theorem}
For all $d\in\N$ and $0 < \epsilon < 1$, the set $\WA_d(\epsilon) := \{\xx\in\R^d : L(\xx) \leq \epsilon\}$ is $\beta$-BMS winning, where $\beta = (\epsilon/3)^{d + 1} \in (0,1/2)$.
\end{theorem}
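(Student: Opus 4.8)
The plan is to hand Alice an explicit winning strategy: at each of her turns she moves her ball inside the ``basin'' $B\bigl(\pp/q,\epsilon q^{-(1+1/d)}\bigr)$ of a rational approximant of exactly the right quality, so that the resulting point $\xx_\infty$ ends up within $\epsilon q^{-(1+1/d)}$ of infinitely many rationals $\pp/q$ with $q\to\infty$. (I read $L(\xx)=\liminf_{\pp/q}\|\xx-\pp/q\|\,q^{1+1/d}$, the reading forced by the sentence ``$L(\xx)>0$ is equivalent to $\xx$'s being badly approximable''; thus $\WA_d(\epsilon)$ is the set of $\xx$ admitting infinitely many rational approximations of quality $\le\epsilon$.) Since in the $\beta$-BMS game Alice must shrink by exactly $\beta$, the inequality $\omega_{n+1}\le_s\omega'_n$ gives $r_{n+1}\le r'_n=\beta r_n$, so the radii tend to $0$, $\xx_\infty$ is a single point, and everything reduces to controlling which basins Alice can force the orbit into.

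The heart of the matter is a \emph{capture lemma}: for $\beta=(\epsilon/3)^{d+1}$, every ball $B(\xx,r)$ with $r$ small enough that $Q_0:=(\epsilon/(\beta r))^{d/(d+1)}\ge 1$ contains a ball $B(\yy,\beta r)$ that is also contained in some basin $B\bigl(\pp/q,\epsilon q^{-(1+1/d)}\bigr)$. I would prove this by applying Dirichlet's theorem (in the supremum norm, where it is sharpest) at $\xx$ with parameter $N=\lfloor Q_0\rfloor$, obtaining $\pp/q$ with $1\le q\le Q_0$ and $\|\xx-\pp/q\|\le (qN^{1/d})^{-1}$. The bound $q\le Q_0$ is \emph{exactly} equivalent to $\rho:=\epsilon q^{-(1+1/d)}\ge\beta r$, so the basin is at least as large as Alice's ball. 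It then remains to place $\yy\in B(\xx,(1-\beta)r)\cap B(\pp/q,\rho-\beta r)$, an intersection which is nonempty precisely when $\|\xx-\pp/q\|\le\rho+(1-2\beta)r$. The decisive estimate is therefore that the Dirichlet distance overshoots $\rho$ by at most $(1-2\beta)r$: one computes $\|\xx-\pp/q\|-\rho\le q^{-1}\bigl(Q_0^{-1/d}-\epsilon q^{-1/d}\bigr)\le \epsilon^{-(d+1)}\beta\,r=3^{-(d+1)}r$, which is comfortably below $(1-2\beta)r$. This is exactly where the value $\beta=(\epsilon/3)^{d+1}$ is forced, the factor $3$ providing the slack needed to absorb the loss in Dirichlet's theorem. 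I expect this slack estimate to be the only real obstacle and the sole place where the precise constant matters.

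Given the lemma, I would run the strategy as follows. On turn $n$ Alice applies it to Bob's ball $B(\xx_n,r_n)$ and plays the ball $B(\yy_n,\beta r_n)$ it produces; because all later balls are nested inside it, $\xx_\infty\in B\bigl(\pp_n/q_n,\epsilon q_n^{-(1+1/d)}\bigr)$, i.e.\ $\|\xx_\infty-\pp_n/q_n\|\,q_n^{1+1/d}\le\epsilon$ for every $n$. What remains is to force $q_n\to\infty$, so that these approximants genuinely witness $\liminf_{q\to\infty}\le\epsilon$ rather than recording proximity to a few fixed rationals. For this I would have Alice also keep $\xx_\infty$ irrational: at turn $n$ she arranges, using the freedom inside the nonempty region furnished by the lemma, that $B(\yy_n,\beta r_n)$ omits the $n$-th rational in a fixed enumeration of $\Q^d$. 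Once $\xx_\infty\notin\Q^d$, a short argument closes the proof: for each fixed $k$ the finitely many rationals of denominator $<k$ lie at a definite positive distance from $\xx_\infty$, so for all sufficiently small $r_n$ the Dirichlet approximant supplied by the lemma necessarily has denominator $\ge k$; hence $q_n\to\infty$. Therefore $\xx_\infty$ has infinitely many rational approximants of quality $\le\epsilon$ with unbounded denominators, whence $L(\xx_\infty)\le\epsilon$ and Alice wins. (One could instead try to invoke Theorem~\ref{theoremcharacterization} by exhibiting $\R^d\butnot\WA_d(\epsilon)=\{L>\epsilon\}$ as a countable union of uniformly $\beta$-porous sets, but the obvious decomposition by eventual bad-approximation level fails to be uniformly porous at points lying deep inside the basin of a low-denominator rational, so the direct strategy above seems the cleaner route.)
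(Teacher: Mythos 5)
Your capture lemma is correct, and it is in essence the paper's own argument: the paper also applies Dirichlet's theorem at the scale $Q$ defined by $r = 2(3/\epsilon)^d Q^{-(1+1/d)}$, which agrees with your $Q_0 = (\epsilon/(\beta r))^{d/(d+1)} = \bigl(3(3/\epsilon)^d/r\bigr)^{d/(d+1)}$ up to the constant $2$ versus $3$, and it exploits exactly the same slack in $\beta = (\epsilon/3)^{d+1}$. Your overshoot chain $\|\xx-\pp/q\|-\rho \le q^{-1}\bigl(Q_0^{-1/d}-\epsilon q^{-1/d}\bigr) \le \epsilon^{-(d+1)}\beta r$ is valid (if the middle quantity is positive then $q > \epsilon^d Q_0$, whence $q^{-1}Q_0^{-1/d} < \epsilon^{-d}Q_0^{-(d+1)/d} = \epsilon^{-(d+1)}\beta r$); it simply unifies what the paper does by the two cases $\|\xx-\pp/q\|\le r/2$ (play $B(\pp/q,\beta r)$) and $\|\xx-\pp/q\|\ge r/2$ (Bob's ball already lies in the basin). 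Your endgame is genuinely different from the paper's: you force $\xx_\infty\notin\Q^d$ by diagonalizing against an enumeration and then deduce a posteriori that the captured denominators blow up, whereas the paper has Alice play a ball disjoint from each captured rational and then wait until $r_n$ is small compared to $(q_n\dist(B_n,\pp_n/q_n))^{d+1}$, which forces the next Dirichlet rational to be new.

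The gap is in the step where you do both tasks in a single turn: ``the freedom inside the nonempty region furnished by the lemma'' can be zero. The lemma only guarantees $V = B(\xx,(1-\beta)r)\cap B(\pp/q,\rho-\beta r) \ne \emptyset$. Your slack estimate does give a ball of radius $\tfrac12\bigl(1-2\beta-3^{-(d+1)}\bigr)r \ge r/3$ inside $V$, but only when the two balls properly cross; in the deep-containment case it fails. Concretely, if the Dirichlet denominator satisfies $\beta r \le \rho = \epsilon q^{-(1+1/d)} \le 2\beta r$ (i.e.\ $q$ is near $Q_0$) and Bob centers his ball at $\pp/q$, then $V = B(\pp/q,\rho-\beta r)$ with $\rho - \beta r \le \beta r$, so \emph{every} capturing ball $B(\yy,\beta r)$ with $\yy\in V$ contains $\pp/q$; if the $n$-th enumerated rational is this very $\pp/q$, Alice cannot capture and avoid it simultaneously (escaping via a different rational's basin would need a separate Farey-type argument you have not given). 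The repair is cheap and, tellingly, reproduces the paper's structure of separating the two jobs onto different turns: capture on odd turns, and on even turns make a purely avoiding move, which is always possible since the legal centers fill $B(\xx,(1-\beta)r)$ and this set contains two points more than $2\beta r$ apart (as $\beta < 1/3$). Capturing on infinitely many turns still places $\xx_\infty$ in infinitely many basins, the even turns still force $\xx_\infty\notin\Q^d$, and your argument that $\pp_n/q_n\to\xx_\infty$ then yields unbounded denominators, hence $L(\xx_\infty)\le\epsilon$, goes through verbatim.
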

\begin{corollary}
The set $\BA_d(\epsilon) = \R^d\butnot\WA_d(\epsilon)$ can be written as the union of countably many uniformly $\beta$-porous sets.
\end{corollary}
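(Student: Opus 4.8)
The plan is to obtain this corollary as an immediate consequence of the preceding theorem together with the characterization result, Theorem \ref{theoremcharacterization}. The preceding theorem provides an explicit $\beta = (\epsilon/3)^{d+1} \in (0,1)$ for which $\WA_d(\epsilon)$ is $\beta$-BMS winning, while Theorem \ref{theoremcharacterization} asserts that, in a separable complete metric space, a \emph{Borel} set $T$ is $\beta$-BMS winning if and only if its complement can be written as a countable union of uniformly $\beta$-porous sets. Taking $X = \R^d$ and $T = \WA_d(\epsilon)$, so that $\R^d \butnot T = \BA_d(\epsilon)$, the conclusion will follow at once provided the hypotheses of Theorem \ref{theoremcharacterization} are met.

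Two hypotheses must be checked. First, $\R^d$ with the Euclidean metric is a separable complete metric space, so the ambient-space requirement is satisfied. Second, and this is the only point demanding any argument, I must verify that $\WA_d(\epsilon)$ is Borel. To this end I would show that the Lagrange spectrum function $L$ is Borel measurable. Fixing an enumeration $(\pp_n/q_n)_{n=1}^\infty$ of $\Q^d$ as in the definition of $L$, each map $\xx \mapsto \|\xx - \pp_n/q_n\| / q_n^{1+1/d}$ is continuous, and hence
\[
L(\xx) = \liminf_{n\to\infty} \frac{\|\xx - \pp_n/q_n\|}{q_n^{1 + 1/d}} = \sup_{N}\inf_{n\geq N} \frac{\|\xx - \pp_n/q_n\|}{q_n^{1 + 1/d}}
\]
is a countable supremum of countable infima of continuous functions, and therefore Borel. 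Consequently $\WA_d(\epsilon) = \{\xx\in\R^d : L(\xx)\leq\epsilon\}$ is a Borel set.

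With both hypotheses in hand, I would invoke the ($\Rightarrow$) direction of Theorem \ref{theoremcharacterization} for the parameter $\beta = (\epsilon/3)^{d+1}$ supplied by the preceding theorem: since $\WA_d(\epsilon)$ is $\beta$-BMS winning and Borel, its complement $\BA_d(\epsilon) = \R^d \butnot \WA_d(\epsilon)$ is a countable union of uniformly $\beta$-porous sets, which is precisely the assertion of the corollary. The entire argument is a direct application of previously established results; the only genuine step is the Borel-measurability verification above, and I foresee no real obstacle there.
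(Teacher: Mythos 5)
Your proposal is correct and is exactly the argument the paper intends: the corollary is stated without proof precisely because it follows immediately from the preceding theorem together with the ($\Rightarrow$) direction of Theorem \ref{theoremcharacterization} applied to $T = \WA_d(\epsilon)$ in $X = \R^d$. Your explicit verification that $\WA_d(\epsilon)$ is Borel (via $L = \sup_N \inf_{n \geq N}$ of continuous functions, noting the liminf is independent of the enumeration of $\Q^d$) fills in a hypothesis the paper leaves implicit, and is carried out correctly.
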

\begin{proof}
Alice's strategy will be as follows: move near a rational point $\pp_n/q_n$, then make a move disjoint from $\pp_n/q_n$, then wait long enough so that Bob's move $B_n = B(\xx_n,r_n)$ satisfies
\[
r_n < 2(3/\epsilon)^3 (q_n \dist(B_n,\pp_n/q_n))^{d + 1},
\]
then repeat. So suppose that Bob has just made the move $B = B(\xx,r) = B(\xx_n,r_n)$, and we will show how Alice can move near a new rational point $\pp_{n + 1}/q_{n + 1}$. Let $Q$ be the unique number such that $r = 2(3/\epsilon)^d/Q^{1 + 1/d}$. By Dirichlet's theorem, there exists $\pp/q = \pp_{n + 1}/q_{n + 1}$ with $q\leq Q$ such that
\begin{equation}
\label{dirichlet}
\left\|\xx - \frac{\pp}{q}\right\| \leq \frac{1}{q Q^{1/d}}\cdot
\end{equation}
Note that this inequality implies that $\pp_{n + 1}/q_{n + 1} \neq \pp_m/q_m$ for all $m\leq n$.

\noindent {\it Case 1.} $\left\|\xx - \frac{\pp}{q}\right\| \leq r/2$. In this case, we have $B(\pp/q,\beta r)\subset B$. On the other hand,
\[
\beta r = \frac{2(3/\epsilon)^d\beta}{Q^{1 + 1/d}} \leq \frac{2(3/\epsilon)^d\beta}{q^{1 + 1/d}} \leq \frac{\epsilon}{q^{1 + 1/d}}\cdot
\]
So the move $B(\pp/q,\beta r)$ will bring Alice sufficiently close to the rational point $\pp/q$.

\noindent {\it Case 2.} $\left\|\xx - \frac{\pp}{q}\right\| \geq r/2$. In this case, by \eqref{dirichlet} we have
\[
\frac{1}{q Q^{1/d}} \geq \frac{r}{2} = \frac{(3/\epsilon)^d}{Q^{1 + 1/d}},
\]
and rearranging gives
\[
q \leq (\epsilon/3)^d Q.
\]
Thus
\[
\frac{r}{2}\leq \left\|\xx - \frac{\pp}{q}\right\| \leq \frac{\epsilon}{3q^{1 + 1/d}},
\]
and in particular
\[
\left\|\xx - \frac{\pp}{q}\right\| + r \leq \frac{\epsilon}{q^{1 + 1/d}}\cdot
\]
It follows that $B\subset B(\pp/q,\epsilon/q^{1 + 1/d})$, so any move Alice makes will bring her sufficiently close to the rational point $\pp/q$.
\end{proof}

\bibliographystyle{amsplain}

\bibliography{bibliography}

\end{document}